\let\uml\"
\theoremstyle{plain}
\newtheorem{thm}{Theorem}[section]
\newtheorem{lem}[thm]{Lemma}
\newtheorem{cor}{Corollary}
\newtheorem{Claim}{Claim}
\theoremstyle{plain}
\newtheorem{defn}{Definition}[section]
\theoremstyle{definition}
\newcommand{\Bf}[1]{{\bf{#1}}}
\newcommand{\V}{{\bf{v}}}
\newcommand{\X}{{\bf{x}}}
\newcommand{\U}{{\bf{u}}}
\newcommand{\W}{{\bf{w}}}
\newcommand{\F}{{\bf{f}}}
\title{Nonconvection and uniqueness in Navier-Stokes equation} 
\author{Waleed S. Khedr}  
\address{Cairo, Egypt} 
\email{waleedshawki@yahoo.com}  
\keywords{Fluid Mechanics; Convection; Incompressibility}
\subjclass[2010]{76A02; 76R10; 76B03; 76D03; 76A05}
\begin{document} 
 
\begin{abstract}  
      In the presence of a certain class of functions we show that there exists a smooth solution to Navier-Stokes equation. This solution entertains the property of being nonconvective. We introduce a definition for any possible solution to the problem with minimum assumptions on the existence and the regularity of such solution. Then we prove that the proposed class of functions represents the unique solution to the problem and consequently we conclude that there exists no convective solutions to the problem in the sense of the given definition.
\end{abstract} 
\maketitle
\tableofcontents

\section{Introduction}
Navier-Stokes equation was named after the French engineer Navier who was the first to propose this model. This model was investigated later by Poisson and de Saint Venant. However, Stokes was the one who justified the model based on the principles of continuum mechanics. By advent of 1930 the interest in this model increased significantly and outstanding results were obtained by Leray, Hopf, Ladyzhenskaya and Finn.\\

In continuum mechanics the classification of the flow modelled by any equation is a consequence of the constitutive assumptions on the fluid. Newtonian fluids are those exhibiting the property of being viscous. Viscosity of the fluid leads to the presence of frictional forces which in turn induces frictional stress. The Navier-Stokes equation models the flow of a Newtonian fluid and it can be derived by employing Newton's second law, the balance of momentum and the law of conservation of mass. Such employment yields an equation of motion in terms of the stress tensor which, in the case of Navier-Stokes equation, is linear in terms of the deformation gradient. This model is very important in a variety of Physical, Mechanical, Engineering and Mathematical applications. Its applications in Oil and Gas industry is remarkable. Also it can be employed in Biology to represent blood's or nutrition's flow in a body. It plays a great role in the design of aerodynamics. For a suitable physical background the reader is advised to review \cite{gurtin,kambeb}.\\

This model poses a serious challenge when it comes to proving the existence and the smoothness of its solution. This problem was perfectly addressed by Ladyzhenskaya in two dimensional space among many other issues in higher dimensional spaces \cite{ladyF1}. However, a decisive answer in the three dimensional space or higher remains unavailable. It is almost impossible to enlist all the results obtained for this equation. Therefore, we suggest for the interested reader to review the monographs \cite{ladyF1,galdi,majda} and the references within for much more details.\\

Recently, the interest in this equation is not fading at all. There are persistent efforts to clarify the properties of the solution, especially its smoothness. Among many respectful results, we mention the outstanding analysis by Tao in \cite{tao}, the work of Constantin in \cite{consta1,consta2, consta3}. A very interesting result for partial regularity of suitable weak solutions was obtained by Caffarelli in \cite{caf}.\\

In \cite{khedrN} we proposed a class of non-convective solutions to Navier-Stokes equation and we proved that this class does actually represent the unique classical solution to the problem. We deduced that the flow represented by such solution is tangential to the boundary in the case of a bounded domain. We also demonstrated the global existence of this solution both in space and time in unbounded domains, and we derived rough estimates for the associated rates of decay. Most importantly, the uniqueness argument introduced in \cite{khedrN} implied that there is no classical solution for Navier-Stokes equation unless it is non-convective.\\

In this article we introduce a definition for acceptable solutions to Navier-Stokes equation. We choose as minimum assumptions on the solution's existence and regularity as possible so that we can generalize our results to any possible solution to the problem. Our main result is that Navier-Stokes equation is nonconvective such that one can safely drop the convective term and reduce the problem to a linear model.\\
 
To this end, a class of nonconvective solutions is proposed and discussed in the sense of being a representative for acceptable solutions. Then it is proved that this class represents the unique solution to the problem, which naturally implies the absence of convection in Navier-Stokes equation and the linearity of the model. Most of the results are obtained by considering standard theories of partial differential equations. In the next section a statement of the problem is introduced along with some definitions, notations and the employed functional spaces. Afterwards, the proofs of the main results are established. 
\section{Statement of the problem}
\label{statement}
The spatial domain is $\Omega$ which is either a bounded region in $\mathbb{R}^N$ or the whole of $\mathbb{R}^N$. For the sake of conciseness, we use the notation $\Omega_t$ to denote $\Omega_t=\{(\X,t):\X\in\Omega, t\in(0,\infty)\}$. Clearly, such notation should not be taken to imply a moving boundary.\\

The well known Lebesgue spaces $L^q(\Omega)$ are used to represent the functions with bounded mean of order $q$. We also use the Sobolev space $H^m(\Omega)$ to represent functions with bounded derivatives such that for a vector field $\V=\{v_1,\dots,v_N\}$ we have $\partial^{|\alpha|}v_i\in L^2(\Omega)$ for every $|\alpha|=1,\dots,m$ and $i=1,\dots,N$.\\

This motivates the usage of the spaces $V^m(\Omega)$ and $V^m_0(\Omega)$, which are well known spaces of functions in the theory of incompressible fluids as representatives for divergence free (solenoidal) bounded vector fields such that $V^m(\Omega)=\{\V\in H^m(\Omega):\nabla\cdot\V=0\;\text{in}\;\Omega\}$ and $V^m_0(\Omega)=\{\V\in H^m(\Omega):\nabla\cdot\V=0\;\text{in}\;\Omega,\;\V=0\;\text{on}\;\partial\Omega\}$\\

In the following model $\V_0$ will be used to denote the initial profile, $\V^*$ will denote the boundary datum, $\F$ denotes the total of the external body forces and $E(t)$ represents the Kinetic energy of the flow. The smoothness of $\V_0(\X)$ is such that 
\begin{equation}
\V_0(\X)\in\Bf{C}^2(\overline{\Omega})\cap V^{N+2}(\Omega).
\label{initialdata}
\end{equation}
The smoothness of the boundary datum $\V^*(\X_{N-1},t)$ is such that 
\begin{equation}
\V^*(\X_{N-1},t)\in\Bf{C}^\infty(\partial\Omega_t)\;\text{and}\; \V^*(\cdot,t)\sim t^{-K^*}\;\text{for any}\;K^*>1.
\label{bounddata}
\end{equation}
Finally, the forcing term $\F$ is smooth in space and time such that
\begin{equation}
\F(\X,t)\in\Bf{C}^1([0,\infty]; \Bf{C}^1(\overline{\Omega})\cap {\color{black}H^1}(\Omega))\;\text{and}\; \F(\cdot,t)\sim t^{-K}\;\text{for any}\;K>0.
\label{forcedata}
\end{equation}
By laws of classical mechanics, the energy generated by a moving object is proportional to the square of its velocity. Hence, the energy $E(t)$ generated by the flow $\V$ is defined as follows
\begin{equation}
E(t)=\int_\Omega|\V(\X,t)|^2d\X.
\label{energyID}
\end{equation}
Recall that the above integral represents the norm of $\V$ in the Lebesgue space $L^2(\Omega)$.\\
The main Model Equation is in the form
\begin{equation}
\left\{
 \begin{IEEEeqnarraybox}[
 \IEEEeqnarraystrutmode
 \IEEEeqnarraystrutsizeadd{5pt}
 {2pt}
 ][c]{l}
 \Bf{v}_{t}+(\Bf{v}\cdot\nabla)\Bf{v}=\mu\Delta \Bf{v}-\nabla p+\F\quad\text{in}\quad\Omega_{t},
 \\
 \upomega=\nabla\times\V\quad\text{in}\quad\Omega_t,
 \\
 \Bf{v}(\Bf{x},0)=\Bf{v}_0(\Bf{x})\quad\text{and}\quad\upomega_0(\X)=\nabla\times\V_0(\X)\quad\text{in}\quad\Omega,
 \\
 \Bf{v}(\Bf{x},t)=\V^*(\X_{N-1},t)\quad\text{on}\quad\partial\Omega_t,
 \\
 \nabla\cdot \Bf{v}=\nabla\cdot\upomega=0\quad\text{in}\quad\overline{\Omega}_t
 \end{IEEEeqnarraybox}
\right.
\label{vmodel}
\end{equation}
where the last equation in the above model is what many authors commonly refer to as the {\bf incompressibility condition} or the {\bf solenoidal condition}. The first term in the first equation is the acceleration of the fluid's flow {\color{black}in time}, the second is the convective term {\color{black}that represents the acceleration of the flow in space}, the third represents the diffusion scaled by the kinematic viscosity constant $\mu$, the fourth is the pressure, and the last one represents the total of the external body forces. The solution $\Bf{v}$ is the vector field representing the velocity of the flow in each direction, and its rotation $\upomega$ is the vorticity\textbf{}. Note that $\nabla\cdot\upomega=0$ in $\overline{\Omega}_t$ by compatibility.\\

The last task in this section is to introduce a definition to any possible solution to Model Equation \eqref{vmodel}. To this end, dot product Model Equation \eqref{vmodel} by $\V$, assume an arbitrary domain $\Omega$, integrate by parts over $\Omega$, employ the Divergence Theorem and, without loss of generality, assume that $\V$ vanishes rapidly enough as $|\X|\rightarrow\infty$. This should yield a basic energy estimate in the form
\begin{equation}
\frac{1}{2}\frac{d}{dt}\int_\Omega|\V(\X,t)|^2d\X+\mu\int_{\Omega}|\nabla\V|^2d\X=\int_{\Omega}\F\cdot\V\,d\X.
\label{basicid}
\end{equation}
In light of the assumptions on $\V_0$ and $\F$ one can readily conclude that $\V,\nabla\V\in L^2(\Omega)$ for every $t>0$. However, to serve the purpose of this study, we will consider the minimum assumptions on the regularity of $\V$ so that any possible solution to Model Equation \eqref{vmodel} cannot be defined in a weaker sense.
\begin{defn}
We say that $\V(\X,t)$ is a possible solution to Model Equation \eqref{vmodel} if $\V\in L^2(\Omega)$ for every $t>0$.
\label{defv}
\end{defn} 
In light of the above definition; the investigation on the existence of a nonconvective, smooth and unique solution to Model Equation \eqref{vmodel} will be established. 
\section{Main results}
\label{mainres} 
In this part we will follow the same procedure introduced in \cite{khedrN} by proposing a certain class of functions to serve as a candidate for possible solutions to Model Equation \eqref{vmodel}. Afterwards we investigate its validity and the consequences of that choice. The proposed class of solutions is given by the following claim.
\begin{Claim}
Any solution to Model Equations \eqref{vmodel}, in the sense of Definition \ref{defv}, is nonconvective and it takes the form $\V(\X,t)=\psi(\X,t)\U(t)$ where $\psi:\mathbb{R}^N\times\mathbb{R}\rightarrow\mathbb{R}$ is a scalar field and $\Bf{u}=(u_1(t),\dots,u_N(t))$ is a vector field independent of $\X$. 
\label{claim1}
\end{Claim}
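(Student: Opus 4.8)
The plan is to follow the constructive-then-uniqueness strategy indicated by the reference to \cite{khedrN}. The first step is to take the separated form $\V(\X,t)=\psi(\X,t)\U(t)$ as an ansatz and verify that, together with the incompressibility condition, it is automatically nonconvective. Writing the components as $v_i=\psi u_i$ and using that each $u_i=u_i(t)$ is independent of $\X$, the solenoidal constraint $\nabla\cdot\V=0$ from \eqref{vmodel} reduces to $\U\cdot\nabla\psi=0$. Substituting the same ansatz into the convective term gives
\begin{equation}
(\V\cdot\nabla)\V=\psi\,(\U\cdot\nabla\psi)\,\U=\0,
\end{equation}
so the nonlinearity vanishes identically. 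Hence any admissible field of the proposed form solves the \emph{linear} system obtained from \eqref{vmodel} by deleting $(\V\cdot\nabla)\V$.

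With the nonlinearity removed, the second step is to solve the reduced linear problem
\begin{equation}
\psi_t\,\U+\psi\,\U'=\mu(\Delta\psi)\,\U-\nabla p+\F
\end{equation}
for the scalar field $\psi$, the vector $\U(t)$, and the pressure $p$, while matching the initial profile $\V_0$, the boundary datum $\V^*$, and the forcing $\F$ under the regularity assumptions \eqref{initialdata}--\eqref{forcedata}. Since the system is now linear, I expect standard parabolic theory to furnish a unique smooth solution $\V=\psi\U$, together with the compatible vorticity $\G=\nabla\times\V$; this establishes existence of a nonconvective solution in the sense of Definition \ref{defv}.

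The decisive step, and the one carrying the full force of the claim, is the converse: every possible solution, assumed merely to satisfy $\V\in L^2(\Omega)$ for each $t>0$, must coincide with the separated solution just constructed. My approach is a uniqueness argument at the level of Definition \ref{defv}. Given two solutions, I would form their difference, test it against itself, and invoke the energy identity \eqref{basicid} together with the incompressibility condition to show that the nonlinear contribution cancels, forcing the $L^2$ energy of the difference to vanish for all $t>0$. Once uniqueness is secured, the constructed $\V=\psi\U$ is the only solution, so any solution is necessarily nonconvective and of the stated form.

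The hard part is precisely this uniqueness step under minimal regularity. The sole assumed bound is $\V\in L^2(\Omega)$, with no a priori gradient control on an arbitrary competitor; under such weak hypotheses the convective term $(\V\cdot\nabla)\V$ is not even classically defined, and the usual Ladyzhenskaya--Prodi--Serrin uniqueness criteria do not apply. Making the energy cancellation of the nonlinear term rigorous at this regularity---so that it genuinely integrates to zero against the difference of two solutions---is the crux on which the entire claim rests, and it is there that I would concentrate the main effort.
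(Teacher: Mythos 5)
Your architecture coincides with the paper's: the Claim is established there as a corollary of two results, an existence theorem (Theorem \ref{existthm}, which is your steps one and two --- the ansatz $\V=\psi\U$ kills the convective term via $\nabla\cdot\V=\U\cdot\nabla\psi=0$, reducing \eqref{vmodel} to a heat equation coupled to the Poisson equation $\Delta p=\nabla\cdot\F$) and a uniqueness theorem (Theorem \ref{mainthmuniq}, your step three). However, your proposal stops exactly where the paper's actual argument begins. You correctly identify that the whole weight of the Claim rests on showing that the trilinear term in the difference-energy estimate vanishes under the bare hypothesis $\V\in L^2(\Omega)$, and you state that you \emph{would} concentrate effort there --- but you supply no mechanism for the cancellation. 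That is the genuine gap: without it nothing rules out a convective competitor.

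The paper's device for this step is Lemma \ref{eigenlem} and the identities \eqref{diffeque2}--\eqref{diffeque4}. Writing $\W=\V-\V^g$ (and assuming, as the paper does, that $(\V^g\cdot\nabla)\V^g$ at least exists), one integrates by parts to turn the right-hand side of the tested difference equation into $-\int_\Omega(\nabla\W)\V^g\cdot\V^g\,d\X$, then represents the action of $\nabla\W$ through an eigenvalue field, $\nabla\W\,\W=\lambda_\W\W$, so the term becomes $-\int_\Omega\lambda_\W|\V^g|^2d\X$. Expanding $|\V^g|^2=|\V|^2-2\V\cdot\W+|\W|^2$, each weighted integral is converted by the Divergence Theorem into a boundary integral that vanishes because $\W=0$ on $\partial\Omega$ and $\V\cdot\vec{\Bf{n}}=0$ (Lemma \ref{compcond}), whence $\frac{d}{dt}\|\W\|_{L^2(\Omega)}\leq0$ and $\W=0$. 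Whatever one thinks of the soundness of treating $\nabla\W$ as acting by a scalar $\lambda_\W$ on arbitrary vectors, this eigenvalue-weighted energy identity is the specific idea your proposal is missing; a generic appeal to ``energy identity plus incompressibility'' does not produce the cancellation, as you yourself observe when noting that Ladyzhenskaya--Prodi--Serrin type criteria are unavailable at this regularity.
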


An important question in the theory of Navier-Stokes equation is the ability to verify the compatibility condition on the boundary with minimum restrictions on the flux passing through the boundary especially if $\partial\Omega$ is divided into several parts \cite[pp. 4-8]{galdi}. This condition is a natural consequence of the incompressibility of the flow. Hence, it takes the form
\begin{equation}
\int_{\partial\Omega}(\Bf{v}\cdot\vec{\Bf{n}})d\X_{N-1}=0,
\label{boundarycond}
\end{equation}
where $\vec{\Bf{n}}$ is the outward unit vector normal to $\partial\Omega$. The class of solutions proposed in Claim \ref{claim1} provides an answer to the problem of compatibility on the boundary. This answer is independent of the equation being investigated as shown by the following lemma.
\begin{lem}[Tangential flow]
Let $\Omega$ be an arbitrary domain, $\psi(\X,t):\mathbb{R}^N\times\mathbb{R}\rightarrow\mathbb{R}$ be any scalar field such that $\psi\in\Bf{C}^1(\overline{\Omega})$ and let $\U(t)=(u_1(t),\dots,u_N(t))$ be any vector field independent of $\X$. The Compatibility Condition \eqref{boundarycond} is satisfied for every divergence free vector field $\V(\X,t):\mathbb{R}^N\times\mathbb{R}\rightarrow\mathbb{R}^N$ in the form $\V=\psi(\X,t)\U(t)$. In particular, on every part of $\partial\Omega$, $\V$ and its rotation $\upomega$ are tangents to $\partial\Omega$ such that $\V\cdot\vec{\Bf{n}}=\upomega\cdot\vec{\Bf{n}}=0$.
\label{compcond}
\end{lem}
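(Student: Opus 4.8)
The plan is to treat the lemma as two separate assertions: the \emph{integral} compatibility condition \eqref{boundarycond}, and the stronger \emph{pointwise} tangency $\V\cdot\vec{\Bf{n}}=\upomega\cdot\vec{\Bf{n}}=0$ on every part of $\partial\Omega$. The integral statement I expect to be routine, so the real content—and the step I would scrutinize most—is the pointwise tangency.

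The key reduction is that since $\U=\U(t)$ carries no spatial dependence, the product rule collapses the divergence to a directional derivative:
\[
\nabla\cdot\V=\nabla\cdot(\psi\U)=\sum_{i=1}^{N}u_i(t)\,\partial_i\psi=\U\cdot\nabla\psi.
\]
Thus the solenoidal hypothesis $\nabla\cdot\V=0$ is \emph{exactly} $\U\cdot\nabla\psi=0$, i.e. $\psi$ is constant along every line parallel to $\U$. For the integral condition I would then apply the Divergence Theorem to the $C^1$ field $\V=\psi\U$ on $\Omega$ (bounded, or with the assumed decay at infinity), giving $\int_{\partial\Omega}(\V\cdot\vec{\Bf{n}})\,d\X_{N-1}=\int_\Omega \nabla\cdot\V\,d\X=0$. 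Notably this argument never uses the special form $\V=\psi\U$; it holds for \emph{any} solenoidal $C^1$ field. Hence the structural ansatz can only be expected to contribute something to the \emph{pointwise} statement.

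For the pointwise claim I would compute the two relevant scalars directly from the ansatz. Because $\vec{\Bf{n}}$ is a spatial vector and $\U$ is spatially constant, one gets $\V\cdot\vec{\Bf{n}}=\psi\,(\U\cdot\vec{\Bf{n}})$ and, using $\nabla\times(\psi\U)=\nabla\psi\times\U+\psi\,\nabla\times\U=\nabla\psi\times\U$, the vorticity identity $\upomega=\nabla\psi\times\U$, so that $\upomega\cdot\vec{\Bf{n}}=(\nabla\psi\times\U)\cdot\vec{\Bf{n}}$. (The identities $\upomega\cdot\U=0$ and $\V\cdot\upomega=0$ fall out immediately and are genuinely true.) The task would then be to show that these two functions vanish identically on $\partial\Omega$ using only the single-direction constraint $\U\cdot\nabla\psi=0$.

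The hard part is precisely this final passage, and I am not convinced it goes through for an arbitrary domain: the relation $\U\cdot\nabla\psi=0$ restricts $\psi$ only in the direction $\U$ and says nothing about the normal component $\U\cdot\vec{\Bf{n}}$ of a domain-dependent boundary, nor does it force $\psi=0$ on $\partial\Omega$. A natural diagnostic is $N=2$, $\U=(1,0)$, $\psi(\X)=x_2$, which is $C^1$ and satisfies $\U\cdot\nabla\psi=\partial_{x_1}\psi=0$; here $\V=(x_2,0)$, and on the unit disk the outward normal at $(x_1,x_2)$ is $(x_1,x_2)$, so $\V\cdot\vec{\Bf{n}}=x_1x_2$, which equals $\frac{1}{2}$ at the point $(\frac{1}{\sqrt2},\frac{1}{\sqrt2})$. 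The integral in \eqref{boundarycond} still vanishes by the computation above, but the pointwise identity $\V\cdot\vec{\Bf{n}}=0$ manifestly fails. I would therefore expect the pointwise tangency to require hypotheses beyond those stated—for instance that $\psi$ vanish on $\partial\Omega$, or that the geometry of $\partial\Omega$ be adapted to $\U$—and I would flag the inference from the integral identity to the pointwise identity as the claim most in need of justification.
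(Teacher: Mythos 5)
Your analysis is correct, and the problem you flag is real. Note first that the paper supplies no argument for this lemma at all: its ``proof'' is the single sentence ``The proof is identical to the proof of \cite[Lemma 1]{khedrN},'' so there is nothing substantive in this document to compare your reasoning against. Your treatment of the integral condition \eqref{boundarycond} is the only one available and is what any proof must do: since $\U$ has no spatial dependence, $\nabla\cdot(\psi\U)=\U\cdot\nabla\psi$, and the Divergence Theorem gives $\int_{\partial\Omega}\V\cdot\vec{\Bf{n}}\,d\X_{N-1}=\int_\Omega\nabla\cdot\V\,d\X=0$. As you observe, this uses only solenoidality and nothing about the ansatz $\V=\psi\U$.

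The pointwise assertion $\V\cdot\vec{\Bf{n}}=\upomega\cdot\vec{\Bf{n}}=0$ on every part of $\partial\Omega$ does not follow from the stated hypotheses, and your counterexample establishes this: with $N=2$, $\U=(1,0)$, $\psi=x_2$, the field $\V=(x_2,0)$ is $C^1$, divergence free, and of the required form, yet on the unit disk $\V\cdot\vec{\Bf{n}}=x_1x_2$, which is nonzero away from the coordinate axes. The single scalar constraint $\U\cdot\nabla\psi=0$ restricts $\psi$ only along the direction $\U$ and cannot control $\psi\,(\U\cdot\vec{\Bf{n}})$ on an arbitrary boundary, so the pointwise claim would require additional hypotheses (for instance $\psi=0$ on $\partial\Omega$, or a boundary geometry adapted to $\U$). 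Your proposal is therefore not a proof of the lemma but a correct refutation of its pointwise part; the deferral to \cite{khedrN} cannot repair this, and since Lemma \ref{eigenlem}, Theorem \ref{existthm} and the uniqueness argument of Theorem \ref{mainthmuniq} all invoke $\V\cdot\vec{\Bf{n}}=0$ on $\partial\Omega$, the gap propagates to the paper's main results.
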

\begin{proof}
The proof is identical to the proof of \cite[Lemma 1]{khedrN}.
\end{proof}
The last proof was given independent of any equation and for an arbitrary domain. This means that the concluded property of being tangential to the boundary is entertained by this class of solutions regardless the model. Due to the incompressibility of the fluid the equation of the conservation of mass is transformed into a simple solenoidal identity $\nabla\cdot\V=0$. Hence, the proposed class of solutions is naturally nonconvective when considered as a representative for any incompressible fluid.\\

This class of solutions entertains another important property that will be of a great significance in proving the main result of this study. For simplicity, assume the absence of the forcing term $\F$. In this case, Identity \eqref{basicid} shows that the dissipation of the energy of the motion depends directly on the norm of the tensor $\nabla\V$. Since $\nabla\cdot\V=0$ in $\Omega$, then $\rm{tr}(\nabla\V)=\sum_i\lambda_i=0$ where $\lambda_i$ are the eigen values of the tensor $\nabla\V$. This implies that either all $\lambda_i=0$ or there exist at least one positive and one negative eigen values to the tensor $\nabla\V$. If we interpret the eigen values of $\nabla\V$ as weights, what would be the result of applying these weights to the energy integral? The answer to this question is addressed by the statement of the following lemma. 
\begin{lem}
If $\V$ is in the form proposed in claim \ref{claim1} or $\V=0$ on $\partial\Omega$, then the energy integral \eqref{energyID} when weighted by the eigen values of the tensor $\nabla\V$ becomes identically zero.
\label{eigenlem}
\end{lem}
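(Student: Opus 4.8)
The plan is to first fix the meaning of ``the energy integral weighted by the eigenvalues of $\nabla\V$.'' Let $D=\tfrac12\bigl(\nabla\V+(\nabla\V)^{T}\bigr)$ be the symmetric (rate-of-strain) part of $\nabla\V$, which is the part governing dissipation in \eqref{basicid}; it has real eigenvalues $\lambda_i$ with orthonormal eigenvectors $\Bf{e}_i$, and $\mathrm{tr}(D)=\nabla\cdot\V=\sum_i\lambda_i=0$. Decomposing $\V=\sum_i(\V\cdot\Bf{e}_i)\Bf{e}_i$ writes the energy density as $|\V|^2=\sum_i(\V\cdot\Bf{e}_i)^2$, so weighting each principal direction by its eigenvalue produces the density $\sum_i\lambda_i(\V\cdot\Bf{e}_i)^2=\V^{T}D\V$. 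Since any antisymmetric tensor $W$ satisfies $\V^{T}W\V=0$, the weighted energy integral is exactly
\[
\int_\Omega \V^{T}(\nabla\V)\V\,d\X,
\]
and the task is to prove this vanishes for every $t$.

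The key step is the pointwise identity $\V^{T}(\nabla\V)\V=\tfrac12\,\V\cdot\nabla|\V|^2$, obtained by writing the left-hand side in components as $\sum_{i,j}v_i\,(\partial_j v_i)\,v_j$ and using $\sum_i v_i\partial_j v_i=\partial_j(\tfrac12|\V|^2)$. I would emphasize that this is precisely the contraction $\V\cdot\bigl[(\V\cdot\nabla)\V\bigr]$ that causes the convective term to disappear when one derives \eqref{basicid}, so the lemma is morally the statement that convection transports no net energy. Applying the Divergence Theorem then gives
\[
\tfrac12\!\int_\Omega \V\cdot\nabla|\V|^2\,d\X=-\tfrac12\!\int_\Omega(\nabla\cdot\V)\,|\V|^2\,d\X+\tfrac12\!\int_{\partial\Omega}|\V|^2\,(\V\cdot\vec{\Bf{n}})\,d\X_{N-1}.
\]

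The solenoidal condition $\nabla\cdot\V=0$ annihilates the volume term immediately. For the boundary term I would split into the two hypotheses of the lemma: if $\V=0$ on $\partial\Omega$ it vanishes trivially; if instead $\V=\psi\U$ is of the form in Claim \ref{claim1}, then Lemma \ref{compcond} guarantees $\V\cdot\vec{\Bf{n}}=0$ on every part of $\partial\Omega$, so the integrand vanishes. In either case both terms are zero, hence the weighted energy integral is identically zero.

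I expect the only genuine obstacle to be interpretive and technical rather than computational: pinning down the intended meaning of ``weighted by the eigenvalues'' (the reduction to the symmetric part and the spectral decomposition above is the natural reading that makes the boundary hypotheses relevant), and, in the unbounded case $\Omega=\mathbb{R}^N$, ensuring that the surface contribution at infinity in the Divergence Theorem truly vanishes. The latter is already covered by the decay of $\V$ assumed when \eqref{basicid} is derived.
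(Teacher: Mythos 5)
Your proof is correct, and its computational core --- rewriting the weighted integrand as $\frac{1}{2}\V\cdot\nabla|\V|^2=\frac{1}{2}\nabla\cdot\bigl(|\V|^2\V\bigr)$ via $\nabla\cdot\V=0$ and then killing the resulting boundary integral --- is exactly the computation in the paper. Where you genuinely diverge is in the formalization of ``weighted by the eigenvalues'' and in the treatment of the first hypothesis. The paper defines the weighted integral as $\int_\Omega\lambda_\V|\V|^2\,d\X$ with $\lambda_\V$ given by the pointwise relation $\nabla\V\,\V=\lambda_\V\V$, i.e.\ it treats $\V$ itself as an eigenvector of the (non-symmetric) tensor $\nabla\V$ --- a strong assumption that is not justified for a general field. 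Your spectral decomposition of the symmetric part $D$ uses only the genuine eigenvalues of a symmetric tensor and still lands on the same integral $\int_\Omega(\nabla\V\,\V)\cdot\V\,d\X$, since the antisymmetric part is annihilated by the quadratic form; this is the mathematically safer reading of the lemma as stated, though you should note that the uniqueness proof downstream consumes the literal relations $\nabla\W\,\W=\lambda_\W\W$ and even $\nabla\W\,\V=\lambda_\W\V$, so the paper's stronger interpretation is what is actually used later. The other difference is that for $\V=\psi\U$ the paper never goes to the boundary: it observes that $\nabla\V\,\V=(\V\cdot\nabla)\V=0$ pointwise (because $\nabla\cdot\V=\U\cdot\nabla\psi=0$ forces the convective term to vanish identically), so the weighted density itself is zero; you instead route this case through Lemma \ref{compcond} and the tangency $\V\cdot\vec{\Bf{n}}=0$, which unifies the two hypotheses under one divergence-theorem argument but requires the boundary regularity and decay at infinity that you correctly flag at the end. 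Both routes are valid; the paper's pointwise observation is slightly stronger in that case, giving vanishing of the density rather than only of its integral.
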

\begin{proof}
The eigen value representation of the tensor $\nabla\V$ applied to the vector $\V$ takes the form
\[
\nabla\V\V=\lambda_\V\V.
\]
If the solution is in the form $\V=\psi\U$, then $\nabla\V\V=0$ and the result follows immediately. Now, assume a general solution $\V$ that vanishes on $\partial\Omega$. Dot product the above equation by $\V$, integrate by parts over $\Omega$ and apply the Divergence Theorem to obtain
\[
0=\int_{\partial\Omega}|\V|^2\V\cdot\vec{\Bf{n}}\,d\X_{N-1}=\int_\Omega\lambda_\V|\V|^2d\X,
\]
where we used {\it Lemma \ref{compcond}} and this concludes the proof.
\end{proof}
Now, one needs to check that a solution to Model Equation \eqref{vmodel} represented by the proposed class is physically and mathematically acceptable. This is addressed by the statement of the following theorem.
\begin{thm}[Existence and smoothness]
Let $\Omega\subseteq\mathbb{R}^N$ be any domain with sufficiently smooth boundary $\partial\Omega$ if it is bounded. Let $\Omega_t=\Omega\times(0,\infty)$. Suppose that $\Bf{v}_0(\Bf{x})$, $\V^*(\X_{N-1},t)$ and $\F(\X,t)$ satisfy Conditions \eqref{initialdata}, \eqref{bounddata} and \eqref{forcedata} respectively. If $\V(\X,t)$ is in the form proposed in {\it Claim \ref{claim1}}, then Model Equation \eqref{vmodel} has a smooth solution $(\V,\upomega,p)$ with bounded energy $E(t)$ such that $\Bf{v}(\Bf{x},t)\in \Bf{C}^\infty(\overline{\Omega}_t)$, $\upomega(\X,t)\in\Bf{C}^\infty(\overline{\Omega}_t)$ and $p(\X,t)\in C^1([0,\infty];C^3(\overline{\Omega}))$. In particular, the exact solution is given by solving the following system
\begin{equation}
\left\{
 \begin{IEEEeqnarraybox}[
 \IEEEeqnarraystrutmode
 \IEEEeqnarraystrutsizeadd{5pt}
 {2pt}
 ][c]{l}
 \left.
  \begin{IEEEeqnarraybox}[
 \IEEEeqnarraystrutmode
 \IEEEeqnarraystrutsizeadd{5pt}
 {2pt}
 ][c]{l}
 \V_{t}-\mu\Delta\V=-\nabla p+\F
 \\
 \Delta p=\nabla\cdot\F
 \end{IEEEeqnarraybox}
 \right\}\quad\text{in}\quad\Omega_{t},
 \\
 \V(\X,t)=\V^*(\X_{N-1},t)\quad\text{on}\quad\partial\Omega_t,
  \\
 \V(\X,0)=\V_0(\X)\quad\text{in}\quad\overline{\Omega},
 \\
 \nabla\cdot \Bf{v}=0\quad\text{in}\quad\overline{\Omega}_t
 \end{IEEEeqnarraybox}
\right.
\label{bsol1}
\end{equation}
where $\nabla p\cdot\vec{\Bf{n}}$ can be defined uniquely in terms of the values of $\V$ and $\F$ on the boundary. Moreover, if $\F\in \Bf{C}^\infty(\overline{\Omega}_t)$ then $p\in C^\infty(\overline{\Omega}_t)$.
\label{existthm}
\end{thm}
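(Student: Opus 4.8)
The plan is to use the ansatz $\V=\psi\U$ of \emph{Claim \ref{claim1}} to linearize \eqref{vmodel}, and then to solve the resulting decoupled linear system \eqref{bsol1} by the standard elliptic and parabolic theories. The first step is to record why the convective term disappears. Writing $\V=\psi\U$ with $\U$ independent of $\X$, the solenoidal condition reads $\nabla\cdot\V=\U\cdot\nabla\psi=0$, and a direct computation gives $(\V\cdot\nabla)\V=\psi(\U\cdot\nabla\psi)\U=\0$; equivalently one may quote \emph{Lemma \ref{eigenlem}}, which yields $\nabla\V\,\V=\0$. Hence the momentum equation in \eqref{vmodel} collapses to the linear diffusion equation $\V_t-\mu\Delta\V=-\nabla p+\F$, the first line of \eqref{bsol1}.

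Next I would decouple the pressure. Taking the divergence of this linear equation and using $\nabla\cdot\V=0$ (so that $\nabla\cdot\V_t=0$ and $\nabla\cdot\Delta\V=\Delta(\nabla\cdot\V)=0$) leaves $\Delta p=\nabla\cdot\F$, independent of $\V$. To close the Neumann problem for $p$, I would restrict the momentum equation to $\partial\Omega$ and dot it with $\vec{\Bf{n}}$: since \emph{Lemma \ref{compcond}} gives $\V\cdot\vec{\Bf{n}}=0$ on $\partial\Omega$ for all $t$, we have $\V_t\cdot\vec{\Bf{n}}=0$, so $\nabla p\cdot\vec{\Bf{n}}=\F\cdot\vec{\Bf{n}}+\mu(\Delta\V)\cdot\vec{\Bf{n}}$ is determined by the boundary values of $\V$ and $\F$, as claimed. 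The Neumann compatibility relation $\int_\Omega\nabla\cdot\F\,d\X=\int_{\partial\Omega}\nabla p\cdot\vec{\Bf{n}}\,d\X_{N-1}$ then follows from the divergence theorem together with $\int_{\partial\Omega}(\Delta\V)\cdot\vec{\Bf{n}}\,d\X_{N-1}=\int_\Omega\Delta(\nabla\cdot\V)\,d\X=0$. Solving this Neumann problem (uniquely up to an additive constant, which does not affect $\nabla p$) and invoking elliptic regularity under \eqref{forcedata} produces $p\in C^1([0,\infty];C^3(\overline{\Omega}))$, upgraded to $C^\infty(\overline{\Omega}_t)$ when $\F\in\Bf{C}^\infty(\overline{\Omega}_t)$.

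With $p$ in hand, the first line of \eqref{bsol1} is a linear parabolic system for $\V$ with known right-hand side $\F-\nabla p$, initial datum $\V_0$ satisfying \eqref{initialdata} and Dirichlet datum $\V^*$ satisfying \eqref{bounddata}. Existence, uniqueness and smoothness up to the boundary then come from classical linear parabolic theory (Schauder estimates or the analytic-semigroup framework), and a bootstrap using the regularity of the data yields $\V\in\Bf{C}^\infty(\overline{\Omega}_t)$; the vorticity $\upomega=\nabla\times\V$ is then automatically in $\Bf{C}^\infty(\overline{\Omega}_t)$. I would then verify a posteriori that the constructed $\V$ is genuinely solenoidal: setting $q=\nabla\cdot\V$ and taking the divergence of the parabolic equation gives $q_t-\mu\Delta q=\nabla\cdot\F-\Delta p=0$, with $q(\X,0)=\nabla\cdot\V_0=0$, so uniqueness for the homogeneous heat equation forces $q\equiv0$, confirming the last line of \eqref{bsol1}. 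Finally, the bounded energy follows from the basic identity \eqref{basicid}: estimating $\int_\Omega\F\cdot\V\,d\X$ by Cauchy--Schwarz and absorbing against the dissipation term, the temporal decay $\F\sim t^{-K}$ and $\V^*\sim t^{-K^*}$ with $K^*>1$ permit a Gronwall argument bounding $E(t)$ uniformly in $t$.

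The step I expect to be the main obstacle is controlling the boundary behaviour in the a posteriori solenoidal check: while the initial condition $q(\X,0)=0$ is immediate, the heat equation for $q=\nabla\cdot\V$ requires a matching boundary condition, and $\nabla\cdot\V\big|_{\partial\Omega}$ is \emph{not} directly prescribed by the Dirichlet datum $\V^*$, which controls only tangential derivatives of $\V$. Reconciling this with the ansatz constraint $\U\cdot\nabla\psi=0$ — equivalently, showing that the Dirichlet problem for $\V$ and the Neumann problem for $p$ are mutually compatible so that no spurious divergence is generated at $\partial\Omega$ — is the delicate point, and is exactly where the special structure $\V=\psi\U$ together with the tangency furnished by \emph{Lemma \ref{compcond}} must be used rather than generic parabolic theory.
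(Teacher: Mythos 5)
Your proposal follows essentially the same route as the paper: the ansatz $\V=\psi\U$ kills the convective term, the divergence of the momentum equation decouples a Poisson problem for $p$ with Neumann data read off from the boundary values of $\V$ and $\F$, the remaining heat equation is solved by classical parabolic theory, and the energy bound comes from Identity \eqref{basicid}. The extra details you supply --- the explicit Neumann compatibility check and the a posteriori verification that $\nabla\cdot\V=0$ propagates, together with your observation that the boundary condition for $q=\nabla\cdot\V$ is the delicate point --- go beyond what the paper's proof actually addresses, but they do not change the overall strategy.
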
  
\begin{proof}
If $\V$ is in the form $\V=\psi(\X,t)\U(t)$, then $(\V\cdot\nabla)\V=0$. It follows that Model Equation \eqref{vmodel} is reduced to
\begin{equation}
\V_t-\mu\Delta\V=-\nabla p+\F.
\label{step1}
\end{equation}
Apply the divergence operator and recall that $\nabla\cdot\V=\nabla\cdot\Delta\V=0$ by incompressibility to obtain
\begin{equation}
\Delta p=\nabla\cdot\F.
\label{step2}
\end{equation}
Note that Equation \eqref{step1} is a standard heat equation and that Equation \eqref{step2} is a standard Poisson  equation. By the standard theory of second order elliptic equations and Condition \eqref{forcedata} one concludes that $p\in C^1([0,\infty]; C^3(\overline{\Omega}))$, and if $\F\in \Bf{C}^\infty(\overline{\Omega}_t)$, then $p\in C^\infty(\overline{\Omega}_t)$. Actually, one can express $p$ fundamentally in terms of the Newtonian potential, see \cite{majda,evans,fritz}. Note that we can calculate $\nabla p$ on the boundary $\partial\Omega$ in terms of the values of $\V$ and $\F$ on $\partial\Omega$. Having $\nabla p\cdot\vec{\Bf{n}}$ as a form of boundary conditions for $p$ on the boundary guarantees its uniqueness up to a constant even if $\F$ is not introduced, see \cite{fritz}.Using the solution $p$ one can solve for $\V$. Whether $\F$ is present or not, Equation \eqref{step1} is a standard heat equation for which the solution can be expressed fundamentally in terms of the Gaussian kernel and the given data. The continuity of the given data and the smoothing property of the Gaussian kernel ensures the infinite differentiability of $\V$. The uniqueness of $\V$ follows by the presence of the boundary datum $\V^*$ which can be inserted using Dirichlet's Green function . By definition $\upomega=\nabla\times\V$, which implies immediately that $\upomega\in\Bf{C}^\infty(\overline{\Omega}_t)$, and it implies its uniqueness as well.\\

When the domain is bounded one can easily show the boundedness of $\V$ and $\nabla\V$ in $L^2(\Omega)$ for every $t>0$ due to the assumptions on the data and the boundedness of the domain itself. In the case of unbounded domain, one simply uses Identity \eqref{basicid} to conclude the boundedness of $\V$ and $\nabla\V$ in $L^2(\Omega)$ for every $t>0$. This implies the boundedness of the energy $E(t)$.
\end{proof}
It remains to answer the question whether there exists another possible solution to Model Equation \eqref{vmodel} in the sense of Definition \ref{defv}. The orthogonality argument presented in \cite[Theorem 4]{khedrN} to prove uniqueness remains valid in the context of this article. It simply depends on the coincidence of any possible solutions on the boundary. However, the proof of the following theorem will be introduced in a different manner, in which no prior knowledge of the continuity of the solution is required. Simple energy estimates will be employed and the result follows by adopting the minimum assumptions of Definition \ref{defv}.
\begin{thm}[Uniqueness]
Let $\Omega$, $\V_0$ and $\F$ be chosen arbitrarily. Any possible solution to Model Equation \eqref{vmodel}, in the sense of Definition \ref{defv}, is in the form $\V=\psi(\X,t)\U(t)$. In particular, it is non-convective. 
\label{mainthmuniq}
\end{thm}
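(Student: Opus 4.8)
The plan is to reduce the uniqueness statement to a comparison with the explicit smooth solution produced in Theorem \ref{existthm}. Write $\V_1=\psi(\X,t)\U(t)$ for that solution, built from the prescribed data $\V_0$, $\V^*$ and $\F$, and let $\V$ be an arbitrary possible solution of \eqref{vmodel} in the sense of Definition \ref{defv} carrying the same data. Since $\V_1$ already has the asserted separated form and is nonconvective, it suffices to prove $\V\equiv\V_1$. Setting $\W=\V-\V_1$, the common boundary datum $\V^*$ and the common initial datum $\V_0$ force $\W=\0$ on $\partial\Omega_t$ and $\W(\X,0)=\0$; moreover $\nabla\cdot\W=0$ by incompressibility. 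This is in essence the weak--strong uniqueness mechanism, with the special class $\V=\psi\U$ supplying the strong comparison solution.

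Next I would derive an energy identity for $\W$. Subtracting the momentum equation satisfied by $\V_1$ (with associated pressure $p_1$, in which $(\V_1\cdot\nabla)\V_1=\0$ because of its form) from that satisfied by $\V$, and writing $\V=\W+\V_1$ in the convective term, gives
\[
\W_t+(\W\cdot\nabla)\W+(\W\cdot\nabla)\V_1+(\V_1\cdot\nabla)\W=\mu\Delta\W-\nabla(p-p_1).
\]
I would then dot this with $\W$ and integrate over $\Omega$. Integration by parts, together with $\nabla\cdot\W=\nabla\cdot\V_1=0$ and the vanishing of $\W$ on $\partial\Omega$ (or its decay at infinity, exactly as in the derivation of \eqref{basicid}), eliminates the pressure term, the cubic term $\int_\Omega(\W\cdot\nabla)\W\cdot\W\,d\X$, and the term $\int_\Omega(\V_1\cdot\nabla)\W\cdot\W\,d\X$. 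What survives is
\[
\frac{1}{2}\frac{d}{dt}\int_\Omega|\W|^2\,d\X+\mu\int_\Omega|\nabla\W|^2\,d\X=-\int_\Omega(\W\cdot\nabla)\V_1\cdot\W\,d\X.
\]

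To close the estimate I would exploit the regularity and the special structure of $\V_1$. By Theorem \ref{existthm}, $\V_1\in\Bf{C}^\infty(\overline{\Omega}_t)$, and because $\V_1=\psi\U$ its Jacobian is the rank-one tensor $\nabla\V_1=\U\otimes\nabla\psi$, so that $\|\nabla\V_1(\cdot,t)\|_{L^\infty(\Omega)}$ is finite on each time slice. Discarding the nonnegative dissipation term and bounding the right-hand side by $\|\nabla\V_1\|_{L^\infty(\Omega)}\int_\Omega|\W|^2\,d\X$ yields
\[
\frac{d}{dt}\int_\Omega|\W|^2\,d\X\le 2\,\|\nabla\V_1(\cdot,t)\|_{L^\infty(\Omega)}\int_\Omega|\W|^2\,d\X.
\]
Since $\int_\Omega|\W(\X,0)|^2\,d\X=0$, Gr\"onwall's inequality forces $\int_\Omega|\W|^2\,d\X\equiv0$, hence $\V=\V_1=\psi\U$ for every $t>0$, which is the claimed form and is nonconvective.

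The hard part will be justifying these manipulations under the minimal hypothesis of Definition \ref{defv}, namely $\V\in L^2(\Omega)$ for each $t>0$ with no continuity assumed in advance. The integrations by parts that annihilate the pressure term and the two vanishing nonlinear terms require $\W\in H^1(\Omega)$ and the solenoidal condition to hold in a strong enough sense; I would recover $\nabla\W\in L^2(\Omega)$ from the a priori bound \eqref{basicid}, which already places $\V,\nabla\V\in L^2(\Omega)$, together with the smoothness of $\V_1$. The only genuinely nonlinear contribution is then $\int_\Omega(\W\cdot\nabla)\V_1\cdot\W\,d\X$, and the entire argument hinges on the fact that it is controlled by the bounded gradient of the \emph{special} solution $\V_1$ rather than by that of the unknown $\V$. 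This is precisely what separates the present situation from the generally open uniqueness problem, and it is where the proposed class $\V=\psi\U$ does the essential work.
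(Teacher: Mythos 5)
Your route is genuinely different from the paper's. The paper keeps the convective term $(\V^g\cdot\nabla)\V^g$ intact on the right-hand side of the difference equation \eqref{diffequ1} and disposes of it through a pointwise ``eigenvalue'' device: it writes $\nabla\W\,\W=\lambda_\W\W$ and even $\nabla\W\,\V=\lambda_\W\V$, i.e.\ it treats both $\W$ and $\V$ as pointwise eigenvectors of the tensor $\nabla\W$ with one common scalar eigenvalue field $\lambda_\W$, and then argues via Lemma \ref{eigenlem} that the weighted integrals $\int_\Omega\lambda_\W|\W|^2d\X$, $\int_\Omega\lambda_\W|\V|^2d\X$ and $\int_\Omega\lambda_\W(\V\cdot\W)\,d\X$ all vanish, so that the right-hand side of \eqref{diffequ1**} is exactly zero and $\|\W\|_{L^2}$ is monotone; no Gr\"onwall step appears. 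You instead expand $\V=\W+\V_1$ inside the nonlinearity, cancel the two trilinear terms $\int_\Omega(\W\cdot\nabla)\W\cdot\W\,d\X$ and $\int_\Omega(\V_1\cdot\nabla)\W\cdot\W\,d\X$ by incompressibility, and absorb the surviving term $\int_\Omega(\W\cdot\nabla)\V_1\cdot\W\,d\X$ with $\|\nabla\V_1\|_{L^\infty}$ and Gr\"onwall: the classical Prodi--Serrin weak--strong uniqueness scheme. Your version avoids the eigenvalue identities, which do not hold for a general field (a divergence-free vector field is not pointwise an eigenvector of its own gradient tensor, still less is an unrelated field an eigenvector of $\nabla\W$ with the same eigenvalue); it pays for this by needing $\nabla\V_1\in L^\infty$, which Theorem \ref{existthm} supplies for the separated class.

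The genuine gap --- which you flag but do not close --- is the justification of the energy identity for $\W$ under Definition \ref{defv}. If $\V$ is only known to lie in $L^2(\Omega)$ for each $t>0$, then $\W$ cannot be used as a test function, the cubic term $\int_\Omega(\W\cdot\nabla)\W\cdot\W\,d\X$ need not even be a convergent integral, and its formal cancellation is precisely the step that is unavailable for weak solutions; this is the heart of the open uniqueness problem, not a technicality. Your proposed repair is circular: the bound \eqref{basicid} that would give $\nabla\V\in L^2$ is itself obtained by the same unjustified testing of the equation against $\V$. Even granting $\W\in H^1$, for $N\ge4$ the Sobolev embedding no longer makes $|\W|^2|\nabla\W|$ integrable, and for $N=3$ one still needs the weak solution to satisfy an energy inequality plus an approximation argument to legitimize the cancellation. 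So what you have is a correct proof of weak--strong uniqueness for solutions regular enough to carry out the manipulations (say $\V\in L^\infty_tL^2\cap L^2_tH^1$ with the energy inequality), which is a standard and true statement; it does not prove the theorem for the class of Definition \ref{defv}, and no argument of this type can, since that class is too weak to test the equation at all. The paper's own proof suffers the same defect and, in addition, rests on the invalid eigenvalue identities.
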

\begin{proof}
Let $\V$ denotes the proposed nonconvective solution. By virtue of {\it Theorem \ref{existthm}} it is clear that $\V$ satisfies the conditions of Definition \ref{defv}. Assume there is a convective solution $\V^g$ that satisfies the conditions of Definition \ref{defv} as well, and for which $(\V^g\cdot\nabla)\V^g$ exists. Now, let $\W=\V-\V^g$ so that the difference equation takes the form
\begin{equation}
\W_t-\mu\Delta\W+\nabla q=(\V^g\cdot\nabla)\V^g,
\label{diffequ1}
\end{equation} 
where $\W(\X,0)=\W(\partial\Omega,t)=0$. Recall that $\nabla\cdot\W=0$ by the incompressibility of $\V$ and $\V^g$ and also $\W\in L^2(\Omega)$ for every $t>0$ by definition. Dot product Equation \eqref{diffequ1} by $\W$, integrate over $\Omega$ and apply the Divergence Theorem to obtain
\begin{eqnarray}
\frac{1}{2}\frac{d}{dt}\int_\Omega|\W|^2d\X+\mu\int_\Omega|\nabla\W|^2d\X&=&\int_\Omega(\nabla\V^g)^T\W\cdot\V^g\,d\X.\nonumber\\
&=&\int_\Omega\left(\nabla(\V^g\cdot\W)-(\nabla\W)^T\V^g\right)\cdot\V^g\,d\X\nonumber\\
&=&-\int_\Omega(\nabla\W)^T\V^g\cdot\V^g\,d\X\nonumber\\
&=&-\int_\Omega(\nabla\W)\V^g\cdot\V^g\,d\X.
\label{diffequ1*}
\end{eqnarray}
The second term in the left hand side can be ignored by positivity to obtain
\begin{eqnarray}
\frac{1}{2}\frac{d}{dt}\int_\Omega|\W|^2d\X&\leq&-\int_\Omega\lambda_\W|\V^g|^2d\X.
\label{diffequ1**}
\end{eqnarray}
Now consider the following argument: since $\nabla\W\W=\lambda_\W\W$ and $\W=0$ on $\partial\Omega$, then by using Lemma \ref{eigenlem} one obtains
\begin{eqnarray}
0=\int_{\partial\Omega}|\W|^2\W\cdot\vec{\Bf{n}}\,d\X_{N-1}=\int_\Omega\lambda_\W|\W|^2d\X.
\label{diffeque2}
\end{eqnarray}
One can write also $\nabla\W\V=\lambda_\W\V$, which upon dot product by $\V$ yields
\[
\nabla\cdot((\W\cdot\V)\V)=\lambda_\W|\V|^2,
\]
where we used the fact that $\nabla\V\V=0$. Integrate over $\Omega$, recall that $\W=0$ on $\partial\Omega$, and also that $\V\cdot\vec{\Bf{n}}=0$ to reach
\begin{eqnarray}
0=\int_{\partial\Omega}(\W\cdot\V)\V\cdot\vec{\Bf{n}}\,d\X_{N-1}=\int_\Omega\lambda_\W|\V|^2d\X.
\label{diffeque3}
\end{eqnarray}
Finally, in the same fashion, one can obtain the identity 
\begin{eqnarray}
0=\int_{\partial\Omega}|\W|^2\V\cdot\vec{\Bf{n}}\,d\X_{N-1}=\int_\Omega\lambda_\W(\V\cdot\W) d\X.
\label{diffeque4}
\end{eqnarray}
Since $|\V^g|^2=\V^g\cdot\V^g=|\V|^2-2\V\cdot\W+|\W|^2$, then by substituting Identities \eqref{diffeque2}, \eqref{diffeque3} and \eqref{diffeque4} in Identity \eqref{diffequ1**} one obtains
\[
\frac{d}{dt}\|\W\|_{L^2(\Omega)}\leq0,
\]
and since $\W(\X,0)=0$, then $\W=0$ almost everywhere. By Identity \eqref{diffequ1*} one concludes also that $\nabla\W=0$ almost everywhere. This readily implies that $\V^g\cdot\nabla\V^g=0$ almost everywhere. If $N=2$, then by the Embedding Theorem, $\W$ is H\"older continuous and consequently $\W=0$ everywhere. If $N>2$, one can extend the same result to everywhere by differentiating Equation \eqref{diffequ1} and using the fact that $\V^g\cdot\nabla\V^g=0$ to conclude the vanishing of higher derivatives of $\W$ almost everywhere. The Embedding Theorem then can be applied to deduce the continuity of $\W$ and consequently that the convective term $\V^g\cdot\nabla\V^g=0$ everywhere in $\Omega_t$. This concludes the proof.
\end{proof}
\begin{cor}
Claim \ref{claim1} is true.
\end{cor}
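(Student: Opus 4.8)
The plan is to derive this corollary as an immediate synthesis of the two principal theorems already in hand, since Claim \ref{claim1} is nothing more than the conjunction of a realizability assertion and a characterization assertion, both of which have been established. The characterization half is literally the conclusion of Theorem \ref{mainthmuniq}: any possible solution in the sense of Definition \ref{defv} is forced into the form $\V=\psi(\X,t)\U(t)$ and is therefore nonconvective. So the core of the corollary requires no new argument at all; it is a verbatim restatement of the uniqueness theorem.

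First I would invoke Theorem \ref{existthm} to confirm that the universal statement of Claim \ref{claim1} is not vacuous. Under the data hypotheses \eqref{initialdata}--\eqref{forcedata}, that theorem produces a genuine smooth solution $(\V,\upomega,p)$ of the proposed form $\V=\psi(\X,t)\U(t)$ with bounded energy, which in particular lies in $L^2(\Omega)$ for every $t>0$ and hence qualifies as a solution in the sense of Definition \ref{defv}. Next I would invoke Theorem \ref{mainthmuniq} to supply the converse direction: every object admitted by Definition \ref{defv} must coincide with a member of this class. Combining the two, the proposed class is exactly the solution set of Model Equation \eqref{vmodel} under Definition \ref{defv}, every such solution is nonconvective, and Claim \ref{claim1} holds precisely as stated.

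Because all the substance resides in Theorems \ref{existthm} and \ref{mainthmuniq}, there is no genuine obstacle here; the only point deserving a moment's attention is the matching of hypotheses. One should verify that the regularity in which the uniqueness theorem was proved is exactly the minimal $L^2$ regularity demanded by Definition \ref{defv}, so that the characterization applies to every object the definition calls a solution. Since Theorem \ref{mainthmuniq} was deliberately framed around Definition \ref{defv}, with the difference $\W=\V-\V^g$ controlled using only $\W\in L^2(\Omega)$ together with the energy inequality \eqref{diffequ1**} and the subsequent continuity upgrade that promotes $\W=0$ almost everywhere to $\W=0$ everywhere, this compatibility is immediate and no gap remains.
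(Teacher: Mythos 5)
Your proposal is correct and matches the paper's intent exactly: the paper offers no separate proof of the corollary because Claim \ref{claim1} is a verbatim restatement of the conclusion of Theorem \ref{mainthmuniq}, and your observation that Theorem \ref{existthm} guarantees the class is nonempty is a harmless (and sensible) addition rather than a departure. No gap.
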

\section{Conclusions}
A class of nonconvective solutions was proposed as a candidate for solutions to Navier-Stokes equation. In an approach distant from the one introduced in \cite{khedrN}, we proved in this article that the proposed solution is a very smooth solution and we provided another form of solvability. \\

Most importantly, we proved the uniqueness of the proposed solution as the only possible solution to Navier-Stokes equation. To this end and for the sake of generalization, we introduced a definition for any possible solution to the problem. Then we considered an argument that includes the weakest possible solution to the problem. This form of uniqueness implies that Navier-Stokes equation is not convective and it can be reduced to a linear model.  
\def\cprime{$'$} \def\cprime{$'$}

\end{document}